\documentclass[12pt]{amsart}
\usepackage{amsfonts}
\usepackage{amssymb,latexsym}
\usepackage{amsmath}
\usepackage{amsthm}
\usepackage{amssymb}
\usepackage{enumerate}
\textwidth 15.00cm \textheight 20cm \topmargin 0.0cm \oddsidemargin
0.5cm \evensidemargin 0.5cm
\parskip 0.0cm

\newtheorem{theorem}{Theorem}[section]
\newtheorem{lemma}[theorem]{Lemma}
\newtheorem{proposition}[theorem]{Proposition}

\theoremstyle{definition}

\newtheorem{example}[theorem]{Example}

\theoremstyle{remark}
\newtheorem{remark}[theorem]{Remark}
\numberwithin{equation}{section}

\begin{document}
\setlength{\baselineskip}{1.2\baselineskip}
\title [$C^2$ interior a priori estimate ]
{The interior $C^2$ estimate for Monge-Amp\`{e}re equation in dimension $n=2$}
\author{Chuanqiang Chen}
\address{Department of Applied Mathematics,\\
         Zhejiang University of Technology\\
        Hangzhou, 310023, Zhejiang Province, CHINA}
\email{cqchen@mail.ustc.edu.cn}
\author{Fei Han}
\address{School of Mathematics Sciences\\
         Xinjiang Normal University\\
         Urumqi, 830054, Xinjiang Uygur Autonomous Region, CHINA}
\email{klhanqingfei@126.com}
\author{Qianzhong Ou}
\address{Department of Mathematics\\
          Hezhou University\\
         Hezhou, 542800, Guangxi Province, CHINA}
\email{ouqzh@163.com}
\thanks{2010 Mathematics Subject Classification: 35J96, 35B45, 35B65}
\thanks{Keywords: interior $C^2$ estimate, Monge-Amp\`{e}re equation, $\sigma_2$ Hessian equation, optimal concavity}
\thanks{Research of the first author is supported by the National Natural Science Foundation of China (NO. 11301497 and NO. 11471188). Research of the second author is supported by the National Natural Science Foundation of China (NO. 11161048). Research of the third author is supported by NSFC No. 11061013 and by Guangxi Science Foundation (2014GXNSFAA118028) and Guangxi Education Department Key Laboratory of Symbolic Computation and Engineering Data Processing.}
\maketitle

\begin{abstract}
In this paper, we introduce a new auxiliary function, and establish the interior $C^2$ estimate for Monge-Amp\`{e}re equation in dimension $n =2$, which was firstly proved by Heinz \cite{H59} by a geometric method.
\end{abstract}

\section{Introduction}
In this paper, we consider the convex solution of Monge-Amp\`{e}re equation
\begin{equation}\label{1.1}
\det D^2u=f(x), \quad \text{ in }  B_R(0) \subset \mathbb{R}^2.
\end{equation}
When the solution $u$ is convex, the equation \eqref{1.1} is elliptic. It is well-known that the interior $C^2$ estimate is an important problem for elliptic equations. For Monge-Amp\`{e}re equation in dimension $n=2$, the corresponding interior $C^2$ estimate was established by Heinz \cite{H59}, and for higher dimension $n \geq 3$, Pogorelov \cite{P78} constructed his famous counterexample, namely irregular solutions to Monge-Amp\`{e}re equations.

Later, Urbas \cite{U90} generalized the counterexample for $\sigma_k$ Hessian equations with $k \geq 3$. So the interior $C^2$ estimate of $\sigma_2$ Hessian equation \begin{equation}\label{1.2}
\sigma_2(D^2 u)=f, \quad \text{ in }  B_R(0) \subset \mathbb{R}^n,
\end{equation}
is an interesting problem, where
$\sigma_2(D^2u)=\sigma_2(\lambda(D^2 u) )= \sum\limits_{1 \le i_1 < i_2 \leq n}\lambda_{i_1}\lambda_{i_2}$, $\lambda(D^2 u)=(\lambda_1, \cdots, \lambda_n)$ are the eigenvalues of $D^2 u$, and $f >0$. For $n=2$, \eqref{1.2} is the Monge-Amp\`{e}re equation \eqref{1.1}. For $n =3$ and $f \equiv 1$, \eqref{1.2} can be viewed as a special Lagrangian equation, and Warren-Yuan obtained the corresponding interior $C^2$ estimate in the celebrated paper \cite{WY09}. Moreover, the problem is still open for general $f$ with $n \geq 4$ and nonconstant $f$ with $n =3$.

Moreover, Pogorelov type estimates for the Monge-Amp\`{e}re equations and $\sigma_k$ Hessian equation ($k \geq 2$) were derived by Pogorelov \cite{P78} and Chou-Wang \cite{CW01} respectively, and see \cite{GRW14} and \cite{LRW15} for some generalizations.

In this paper, we introduce a new auxiliary function, and establish the interior $C^2$ estimate as follows
\begin{theorem}\label{th1.1}
Suppose $u \in C^4(B_R(0))$ be a convex solution of Monge-Amp\`{e}re equation \eqref{1.1} in dimension $n=2$, where $0 < m \leq f \leq M$ in $B_R(0)$. Then
\begin{align}\label{1.3}
|D^2 u(0)| \leq C_1 e^{C_2 \frac{\sup |Du|^2}{R^2}},
\end{align}
where $C_1$ is a positive constant depending only on $m$, $M$, $R \sup |\nabla f|$, $R^2 \sup |\nabla^2 f|$, and $C_2$ is a positive constant depending only on $m$ and $M$.
\end{theorem}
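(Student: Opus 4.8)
The plan is to establish \eqref{1.3} by a Pogorelov-type maximum principle applied to a carefully chosen auxiliary function. By a standard exhaustion we may assume $u\in C^{4}(\overline{B_{R}})$, and by the rescaling $v(x)=R^{-2}u(Rx)$ (which solves the same equation on $B_{1}(0)$ with unchanged bounds on the right-hand side, with $R\sup|\nabla f|$ and $R^{2}\sup|\nabla^{2}f|$ as the new gradient and Hessian bounds, with $\sup_{B_{1}}|Dv|^{2}=R^{-2}\sup|Du|^{2}$, and with $D^{2}v(0)=D^{2}u(0)$) we may take $R=1$, so that it suffices to prove $|D^{2}u(0)|\le C_{1}e^{C_{2}\sup_{B_{1}}|Du|^{2}}$. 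I would then work with the auxiliary function
\[
\varphi(x,\xi)=\log u_{\xi\xi}(x)+\frac{\beta}{2}|Du(x)|^{2}+a\log(1-|x|^{2}),
\]
maximized over $x\in B_{1}(0)$ and unit vectors $\xi$, where $\beta>0$ will depend only on $m,M$ and $a>0$ is a large parameter fixed at the end; in dimension $2$ one may equally use $\log\Delta u$ in place of $\log u_{\xi\xi}$. Since $\det D^{2}u=f\ge m>0$ forces $D^{2}u>0$, the function $\log u_{\xi\xi}$ is finite and continuous on $\overline{B_{1}}$, while $\Delta u\ge 2\sqrt{f}\ge 2\sqrt{m}>0$ and $\varphi\to-\infty$ at $\partial B_{1}$; hence the maximum is attained at an interior point $x_{0}$ in some direction which, after a rotation, we take to be $e_{1}$. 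At $x_{0}$ one has $D^{2}u(x_{0})=\mathrm{diag}(\lambda_{1},\lambda_{2})$ with $\lambda_{1}\ge\lambda_{2}>0$, $\lambda_{1}\lambda_{2}=f(x_{0})$, $u_{11}(x_{0})=\lambda_{1}$, and it is $\lambda_{1}$ that must be bounded.

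At $x_{0}$ I would evaluate $0\ge L\varphi$, where $L=u^{ij}\partial_{ij}$ and $(u^{ij})=(D^{2}u)^{-1}$. Differentiating $\log\det D^{2}u=\log f$ once gives $u^{ij}u_{ijk}=(\log f)_{k}$, hence $L(\tfrac12|Du|^{2})=\Delta u+u_{k}(\log f)_{k}$; differentiating twice and substituting into $L(\log u_{11})$ yields $\tfrac{(\log f)_{11}}{\lambda_{1}}$ together with a third-order term which, in $n=2$, collapses to the manifestly nonnegative $\tfrac{u_{112}^{2}}{\lambda_{1}^{2}\lambda_{2}}+\tfrac{u_{122}^{2}}{\lambda_{1}\lambda_{2}^{2}}$ — this exact cancellation is the two-dimensional ``optimal concavity'' of $\log\sigma_{2}=\log\det$ and is the structural heart of the estimate. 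Finally $L\big(a\log(1-|x|^{2})\big)=-\tfrac{2a(u^{11}+u^{22})}{1-|x|^{2}}-\tfrac{4a\sum_{i}x_{i}^{2}/\lambda_{i}}{(1-|x|^{2})^{2}}$, where $u^{11}+u^{22}=\tfrac{\Delta u}{f}$ in dimension $2$.

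Next I would use the first-order condition $\nabla\varphi(x_{0})=0$, i.e.\ $\tfrac{u_{11i}}{\lambda_{1}}+\beta\lambda_{i}u_{i}=\tfrac{2ax_{i}}{1-|x_{0}|^{2}}$, to remove the worst cutoff term: it becomes $\tfrac1a\sum_{i}\tfrac1{\lambda_{i}}\big(\tfrac{u_{11i}}{\lambda_{1}}+\beta\lambda_{i}u_{i}\big)^{2}$, whose pure third-order part is absorbed (once $a$ is not too small) by the surplus $\tfrac{u_{112}^{2}}{\lambda_{1}^{2}\lambda_{2}}+\tfrac{u_{122}^{2}}{\lambda_{1}\lambda_{2}^{2}}$, and whose remaining pieces are $\tfrac{\beta^{2}}{a}\sum_{i}\lambda_{i}u_{i}^{2}$ (bounded by $\tfrac{\beta^{2}}{a}\lambda_{1}|Du|^{2}$) plus controllable cross terms; the leftover $u_{111}^{2}$ that the surplus cannot swallow is estimated by the $i=1$ instance of the same first-order identity, producing an $O\!\big(\tfrac{a}{\lambda_{1}(1-|x_{0}|^{2})^{2}}\big)$ contribution. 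Collecting everything, $0\ge L\varphi$ becomes an inequality whose leading positive term is $\beta\lambda_{1}$ (coming from $L(\tfrac12|Du|^{2})$) and whose negative terms are of the shape $\tfrac{a\lambda_{1}}{f(1-|x_{0}|^{2})}$, $\tfrac{C\beta^{2}}{a}\lambda_{1}|Du|^{2}$, $O\!\big(\tfrac{a}{\lambda_{1}(1-|x_{0}|^{2})^{2}}\big)$, together with lower-order contributions controlled by $m$, $M$, $\sup|\nabla f|$, $\sup|\nabla^{2}f|$. Choosing $\beta$ large in terms of $m,M$, then $a$ of size $\beta\sup|Du|^{2}$, and multiplying through by a suitable power of $1-|x_{0}|^{2}$ and of $\lambda_{1}^{-1}$ so as to isolate $Q:=(1-|x_{0}|^{2})^{a}u_{11}(x_{0})$, I expect to obtain $Q\le C$ with $C$ depending only on $m,M,\sup|\nabla f|,\sup|\nabla^{2}f|$ (and on $a$, hence on $\sup|Du|^{2}$, only logarithmically, which is harmless). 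Comparing $\varphi$ at $0$ with its maximum value and using $1-|x_{0}|^{2}\le1$ then gives $\lambda_{\max}(D^{2}u(0))\le Ce^{\frac{\beta}{2}\sup|Du|^{2}}$, and undoing the rescaling yields \eqref{1.3} with $C_{2}=\beta/2$.

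The step I expect to be the main obstacle — and the reason a new auxiliary function is needed — is precisely the balancing in the previous paragraph: the cutoff $a\log(1-|x|^{2})$ unavoidably generates negative terms carrying inverse powers of $1-|x|^{2}$ and, worse, a factor of $\lambda_{1}$, and these must be dominated by the good third-order term $\tfrac{u_{112}^{2}}{\lambda_{1}^{2}\lambda_{2}}+\tfrac{u_{122}^{2}}{\lambda_{1}\lambda_{2}^{2}}$ and by $\beta\lambda_{1}$ \emph{while keeping $\beta$, hence $C_{2}$, dependent on $m$ and $M$ alone}. Designing $\varphi$ — in particular the interplay of the power $a$, the constant $\beta$, and the exact second-order quantity placed inside the logarithm — so that this loop closes is the crux; everything else is a careful bookkeeping of terms controlled by the hypotheses $m\le f\le M$, $\sup|\nabla f|$, $\sup|\nabla^{2}f|$.
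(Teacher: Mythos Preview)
Your proposal contains a genuine gap, and it is exactly the one you flag as ``the main obstacle'' without actually overcoming. With the radial cutoff $a\log(1-|x|^{2})$, the term $u^{ij}\rho_{ij}/\rho$ contributes
\[
-\frac{2a(u^{11}+u^{22})}{1-|x_{0}|^{2}}=-\frac{2a(\lambda_{1}+\lambda_{2})}{f(1-|x_{0}|^{2})},
\]
whose dominant piece $-\dfrac{2a\lambda_{1}}{f(1-|x_{0}|^{2})}$ must be beaten by $\beta\lambda_{1}$ if $\beta$ is to depend only on $m,M$. But $(1-|x_{0}|^{2})^{-1}$ has no a priori upper bound, so this is impossible. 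Nor can the surplus third-order terms rescue you: via the first-order condition, $\dfrac{u_{112}^{2}}{\lambda_{1}^{2}\lambda_{2}}$ yields a positive term of order $\dfrac{a^{2}\lambda_{1}x_{2}^{2}}{f\rho^{2}}$, which controls $\dfrac{a\lambda_{1}}{\rho}$ only when $ax_{2}^{2}\gtrsim\rho$. If the maximum point satisfies $x_{2}=0$ and $|x_{1}|$ is close to $1$ (in the eigenframe), this help vanishes identically, while $-\dfrac{2a\lambda_{1}}{f(1-x_{1}^{2})}$ remains. Your closing paragraph correctly senses that the loop does not close with this choice of cutoff; it really doesn't.

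The paper's new idea is precisely to replace the isotropic cutoff by an \emph{eigenvector-dependent} one,
\[
\eta(x)=\bigl(r^{2}-|x|^{2}+\langle x,\tau(x)\rangle^{2}\bigr)\bigl(r^{2}-\langle x,\tau(x)\rangle^{2}\bigr),
\]
where $\tau(x)$ is the unit eigenvector of $D^{2}u(x)$ for the largest eigenvalue. At the maximum point, after rotating so that $\tau(x_{0})=e_{1}$, this becomes $\eta=(r^{2}-x_{2}^{2})(r^{2}-x_{1}^{2})$, a \emph{product} of one-dimensional cutoffs aligned with the eigenframe. The crucial gain is that the dangerous coefficient $F^{22}=\lambda_{1}$ now multiplies $\dfrac{\eta_{22}}{\eta}\approx -\dfrac{2}{r^{2}-x_{2}^{2}}$, which depends only on $x_{2}$. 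When $x_{2}^{2}\le r^{2}/2$ this is bounded and is absorbed by $\tfrac{g'}{g}f\lambda_{1}$ with $g(t)=e^{c_{0}t/r^{2}}$, $c_{0}=32/m$; when $x_{2}^{2}\ge r^{2}/2$ it is dominated by $\lambda_{1}(\eta_{2}/\eta)^{2}\approx \lambda_{1}\bigl(\tfrac{2x_{2}}{r^{2}-x_{2}^{2}}\bigr)^{2}$, which in turn is absorbed by the square of the first-order condition (this is why the paper takes the cutoff power $\beta=4>2$). The price is that $\eta$ now depends on $D^{2}u$ through $\tau$, so differentiating it produces third-order terms; Proposition~2.1 on derivatives of eigenvectors is set up precisely to control those. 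Your computation of the nonnegative third-order remainder $\tfrac{u_{112}^{2}}{\lambda_{1}^{2}\lambda_{2}}+\tfrac{u_{221}^{2}}{\lambda_{1}\lambda_{2}^{2}}$ is correct and is used, but it is the anisotropic cutoff, not that remainder, that is the missing ingredient.
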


\begin{remark}
By Trudinger's gradient estimates (see \cite{T97}), we can bound $|D^2 u(0)|$ in terms of $u$. In fact, we can get from the convexity of $u$
\begin{align*}
\mathop {\sup }\limits_{B_{\frac{R}{2}} (0)} |Du| \leq \frac{\mathop {osc }\limits_{B_{R}(0) } u}{\frac{R}{2}} \leq \frac{4\mathop {\sup }\limits_{B_{R} (0)}|u| }{R},
\end{align*}
and
\begin{align}\label{1.4}
|D^2 u(0)| \leq C_1 e^{C_2 \frac{\mathop {\sup }\limits_{B_{\frac{R}{2}}(0) } |Du|^2}{(\frac{R}{2})^2}}\leq C_1 e^{16 C_2 \frac{ \mathop {\sup }\limits_{B_{R}(0) }|u|^2} {R^4}}.
\end{align}
\end{remark}

\begin{remark}
The result was firstly proved by Heinz \cite{H59}. In fact, Heinz's proof depends on the strict convexity of solutions and the geometry of convex hypersurface in dimension two. Our proof, which is based on a suitable choice of auxiliary functions, is elementary and avoids geometric
computations on the graph of solutions.
\end{remark}

\begin{remark}
The interior $C^2$ estimate of $\sigma_2$ Hessian equation \eqref{1.2} in higher dimensions is a longstanding problem. As we all know, it is hard to find a corresponding geometry in higher dimensions, so we can not generalize Heinz's proof or Warren-Yuan's proof to higher dimensions. But the method in this paper and the optimal concavity in \cite{C13} is helpful for this problem.
\end{remark}

The rest of the paper is organized as follows. In Section 2, we give the calculations of the derivatives of eigenvalues and eigenvectors with respect to the matrix. In Section 3, we introduce a new auxiliary function, and prove Theorem \ref{th1.1}.

\section{Derivatives of eigenvalues and eigenvectors}

In this section, we give the calculations of the derivatives of eigenvalues and eigenvectors with respect to the matrix. We think the following result is known for many people, for example see \cite{A07} for a similar result. For completeness, we give the result and a detailed proof.

\begin{proposition}\label{prop2.1}
Let $W= \{W_{ij}\}$ is an $n \times n$ symmetric matrix and $ \lambda(W)= (\lambda_1,\lambda _2, \cdots ,\lambda _{n})$ are the eigenvalues
of the symmetric matrix $W$, and the corresponding continuous eigenvector field is $\tau^i=({\tau^i}_{,1}, \cdots, {\tau^i}_{,n}) \in \mathbb{S}^{n-1}$. Suppose that $W= \{W_{ij}\}$ is diagonal, $\lambda_i= W_{ii}$ and the corresponding eigenvector $\tau^i=(0, \cdots ,0,\mathop 1\limits_{i^{th} } ,0, \cdots ,0) \in \mathbb{S}^{n-1}$ at the diagonal matrix $W$. If $\lambda_k$ is distinct with other eigenvalues, then we have at the diagonal matrix $W$
\begin{align}
\label{2.1}&\frac{{\partial {\tau^k} _{,k} }}{{\partial W_{pq} }} = 0, ~\forall ~p, q; \quad \frac{{\partial {\tau^k} _{,i} }}{{\partial W_{ik} }} = \frac{1}{\lambda_k - \lambda_i}, \quad i \ne k; \quad \frac{{\partial {\tau^k}_{,i} }}{{\partial W_{pq} }} = 0, \text{  otherwise}. &\\
\label{2.2}& \frac{{\partial ^2 {\tau^k}_{,k} }}
{{\partial W_{pk} \partial W_{pk} }} =  - \frac{1} {{(\lambda _k  - \lambda _p )^2 }}, \quad p \ne k;\\
\label{2.3}&\frac{{\partial ^2 {\tau^k} _{,i} }} {{\partial W_{ik} \partial W_{ii} }} = \frac{1}
{{(\lambda _k  - \lambda _i )^2 }}, ~i \ne k; \quad \frac{{\partial ^2 {\tau^k}_{,i} }}
{{\partial W_{ik} \partial W_{kk} }} =  - \frac{1}{{(\lambda _k  - \lambda _i )^2 }}, \quad i \ne k;\\
\label{2.4}& \frac{{\partial ^2 {\tau^k}_{,i} }}{{\partial W_{iq} \partial W_{qk} }} = \frac{1}
{{\lambda _k  - \lambda _i }}\frac{1}{{\lambda _k  - \lambda _q }}, \quad i \ne k,i \ne q,q \ne k;\\
\label{2.5}& \frac{{\partial ^2 {\tau^k}_{,i} }} {{\partial W_{pq} \partial W_{rs} }} = 0, \text{ otherwise}.
\end{align}
\end{proposition}

\begin{proof}
From the definition of eigenvalue and eigenvector of matrix $W$, we have
\begin{align*}
(W - \lambda _k I){\tau^k} \equiv 0,
\end{align*}
where ${\tau^k} $ is the eigenvector of $W$ corresponding to the eigenvalue $\lambda_k$. That is, for $i = 1, \cdots, n$, it holds
\begin{align}\label{2.6}
  [W_{ii}  - \lambda _k ]{\tau^k} _{,i}  + \sum\limits_{j \ne i} {W_{ij} } {\tau^k} _{,j}  = 0.
 \end{align}

When $W= \{W_{ij}\}$ is diagonal and $\lambda_k$ is distinct with other eigenvalues, $\lambda_k$ and $\tau^k$ are $C^2$ at the matrix $W$. In fact,
\begin{align}\label{2.7}
{\tau^k}_{,k}=1, \quad {\tau^k}_{,i} =0, \quad i \ne k, \quad \text{ at } W.
\end{align}

Taking the first derivative of \eqref{2.6}, we have
\begin{align*}
  [\frac{{\partial W_{ii} }}
{{\partial W_{pq} }} - \frac{{\partial \lambda _k }}
{{\partial W_{pq} }}]{\tau^k} _{,i}  + [W_{ii}  - \lambda _k ]\frac{{\partial {\tau^k}_{,i} }}
{{\partial W_{pq} }} + \sum\limits_{j \ne i} {[\frac{{\partial W_{ij} }}
{{\partial W_{pq} }}{\tau^k}_{,j}  + W_{ij} \frac{{\partial {\tau^k} _{,j} }}
{{\partial W_{pq} }}]}  = 0.
 \end{align*}
Hence for $i=k$, we get from \eqref{2.7}
\begin{align}\label{2.8}
  \frac{{\partial \lambda _k }}
{{\partial W_{pq} }} = \frac{{\partial W_{kk} }}
{{\partial W_{pq} }} = \left\{ \begin{gathered}
  \begin{array}{*{20}c}
   {1,} & {p = k,q = k;}  \\
 \end{array}  \hfill \\
  \begin{array}{*{20}c}
   {0,} & {otherwise.}  \\
 \end{array}  \hfill \\
\end{gathered}  \right.
 \end{align}
And for $i \ne k$,
\begin{align*}
  [W_{ii}  - \lambda _k ]\frac{{\partial {\tau^k}_{,i} }}
{{\partial W_{pq} }} + \sum\limits_{j \ne i} {\frac{{\partial W_{ij} }}
{{\partial W_{pq} }}{\tau^k}_{,j} }  = 0,
 \end{align*}
then
\begin{align}\label{2.9}
  \frac{{\partial {\tau^k}_{,i} }}
{{\partial W_{pq} }} = \frac{1}
{{\lambda _k  - \lambda _i }}\frac{{\partial W_{ik} }}
{{\partial W_{pq} }} = \left\{ \begin{gathered}
  \begin{array}{*{20}c}
   {\frac{1}
{{\lambda _k  - \lambda _i }},} & {p = i,q = k;}  \\
 \end{array}  \hfill \\
  \begin{array}{*{20}c}
   {0,} & {\qquad otherwise.}  \\
\end{array}  \hfill \\
\end{gathered}  \right.
 \end{align}

Since $\tau^k \in \mathbb{S}^{n-1}$, we have
\begin{align} \label{2.10}
  1 = |\tau ^k |^2  = ({\tau^k}_{,1} )^2  +  \cdots  + ({\tau^k}_{,k} )^2  +  \cdots  + ({\tau^k}_{,n} )^2.
\end{align}
Taking the first derivative of \eqref{2.10}, and using \eqref{2.7}, it holds
\begin{align}\label{2.11}
  \frac{{\partial {\tau^k}_{,k} }}{{\partial W_{pq} }} = 0, \quad \forall (p,q).
\end{align}

For $i=k$, taking the second derivative of \eqref{2.6}, and using \eqref{2.7}, it holds
\begin{align*}
  [\frac{{\partial ^2 W_{kk} }}
{{\partial W_{pq} \partial W_{rs} }} - \frac{{\partial ^2 \lambda _k }}
{{\partial W_{pq} \partial W_{rs} }}]{\tau^k}_{,k}  + \sum\limits_{j \ne k} {[\frac{{\partial W_{kj} }}
{{\partial W_{pq} }}\frac{{\partial {\tau^k}_{,j} }}
{{\partial W_{rs} }} + \frac{{\partial W_{kj} }}
{{\partial W_{rs} }}\frac{{\partial {\tau^k} _{,j} }}
{{\partial W_{pq} }}]}  = 0,
 \end{align*}
hence
\begin{align} \label{2.12}
  \frac{{\partial ^2 \lambda _k }}
{{\partial W_{pq} \partial W_{rs} }} =& \sum\limits_{j \ne k} {[\frac{{\partial W_{kj} }}
{{\partial W_{pq} }}\frac{{\partial {\tau^k} _{,j} }}
{{\partial W_{rs} }} + \frac{{\partial W_{kj} }}
{{\partial W_{rs} }}\frac{{\partial {\tau^k} _{,j} }}
{{\partial W_{pq} }}]}  \notag \\
 =& \left\{ \begin{gathered}
  \begin{array}{*{20}c}
   {\frac{1}
{{\lambda _k  - \lambda _q }},} & {p = k,q \ne k,r = q,s = k;}  \\
 \end{array}  \hfill \\
  \begin{array}{*{20}c}
   {\frac{1}
{{\lambda _k  - \lambda _s }},} & {r = k,s \ne k,p = s,q = k;}  \\
 \end{array}  \hfill \\
  \begin{array}{*{20}c}
   {0,} & {\qquad otherwise.}  \\
 \end{array}  \hfill \\
\end{gathered}  \right.
 \end{align}
For $i \ne k$, it holds
\begin{align*}
 & [\frac{{\partial W_{ii} }}
{{\partial W_{pq} }} - \frac{{\partial \lambda _k }}
{{\partial W_{pq} }}]\frac{{\partial {\tau^k}_{,i} }}
{{\partial W_{rs} }} + [\frac{{\partial W_{ii} }}
{{\partial W_{rs} }} - \frac{{\partial \lambda _k }}
{{\partial W_{rs} }}]\frac{{\partial {\tau^k}_{,i} }}
{{\partial W_{pq} }} + [W_{ii}  - \lambda _k ]\frac{{\partial ^2 {\tau^k}_{,i} }}
{{\partial W_{pq} \partial W_{rs} }}  \\
&+ \sum\limits_{j \ne i} {[\frac{{\partial W_{ij} }}
{{\partial W_{pq} }}\frac{{\partial {\tau^k}_{,j} }}
{{\partial W_{rs} }} + \frac{{\partial W_{ij} }}
{{\partial W_{rs} }}\frac{{\partial {\tau^k}_{,j} }}
{{\partial W_{pq} }}]}  = 0,
 \end{align*}
then
\begin{align}
\label{2.13}&\frac{{\partial ^2 {\tau^k}_{,i} }}{{\partial W_{ik} \partial W_{ii} }} = \frac{1}
{{\lambda _k  - \lambda _i }}\frac{{\partial {\tau^k}_{,i} }}{{\partial W_{ik} }} = \frac{1}
{{\lambda _k  - \lambda _i }}\frac{1}{{\lambda _k  - \lambda _i }}, \quad i \ne k;   \\
\label{2.14}&\frac{{\partial ^2 {\tau^k}_{,i} }}{{\partial W_{iq} \partial W_{qk} }} = \frac{1}
{{\lambda _k  - \lambda _i }}\frac{{\partial W_{iq} }}
{{\partial W_{iq} }}\frac{{\partial {\tau^k}_{,q} }}
{{\partial W_{qk} }} = \frac{1}{{\lambda _k  - \lambda _i }}\frac{1}
{{\lambda _k  - \lambda _q }}, \quad i \ne k,i \ne q,q \ne k; \\
\label{2.15}&\frac{{\partial ^2 {\tau ^k} _{,i} }} {{\partial W_{ik} \partial W_{kk} }} = \frac{1}
{{\lambda _k  - \lambda _i }}[ - \frac{{\partial \lambda _k }}
{{\partial W_{kk} }}\frac{{\partial {\tau ^k} _{,i} }}{{\partial W_{ik} }}] =  - \frac{1}
{{\lambda _k  - \lambda _i }}\frac{1}{{\lambda _k  - \lambda _i }}, \quad i \ne k; \\
\label{2.16}&\frac{{\partial ^2 {\tau^k}_{,i} }}{{\partial W_{pq} \partial W_{rs} }} = 0, \quad \text{ otherwise. }
\end{align}

From \eqref{2.10}, we have
\begin{align*}
2\tau ^k _{,k} \frac{{\partial ^2 {\tau^k}_{,k} }}
{{\partial W_{pq} \partial W_{rs} }} + 2\sum\limits_{i \ne k} {\frac{{\partial {\tau^k}_{,i} }}
{{\partial W_{pq} }}\frac{{\partial {\tau^k}_{,i} }}
{{\partial W_{rs} }}}  = 0,
 \end{align*}
then
\begin{align*}
\frac{{\partial ^2 {\tau^k}_{,k} }}
{{\partial W_{pq} \partial W_{rs} }} =  - \sum\limits_{i \ne k} {\frac{{\partial {\tau^k}_{,i} }}
{{\partial W_{pq} }}\frac{{\partial {\tau^k}_{,i} }}
{{\partial W_{rs} }}}  = \left\{ \begin{gathered}
  \begin{array}{*{20}c}
   { - \frac{1}
{{\lambda _k  - \lambda _p }}\frac{1}
{{\lambda _k  - \lambda _p }},} & {p \ne k,q = k,r = p,s = q;}  \\
 \end{array}  \hfill \\
  \begin{array}{*{20}c}
   {0,} & {\qquad \qquad \quad otherwise.}  \\
 \end{array}  \hfill \\
\end{gathered}  \right.
\end{align*}
The proof of Proposition \ref{prop2.1} is finished.
\end{proof}

\begin{example}
When $n =2$, the matrix $
\left( {\begin{array}{*{20}c}
   {u_{11} } & {u_{12} }  \\
   {u_{21} } & {u_{22} }  \\

 \end{array} } \right)
$
has two eigenvalues
\[
\begin{gathered}
  \lambda _1  = \frac{{(u_{11}  + u_{22} ) + \sqrt {(u_{11}  - u_{22} )^2  + 4u_{12} u_{21} } }}
{2}, \hfill \\
  \lambda _2  = \frac{{(u_{11}  + u_{22} ) - \sqrt {(u_{11}  - u_{22} )^2  + 4u_{12} u_{21} } }}
{2}, \hfill \\
\end{gathered}
\]
with $\lambda_1 \geq \lambda_2$. If $\lambda_1 > \lambda_2$,
\[
\left[ {\left( {\begin{array}{*{20}c}
   {u_{11} } & {u_{12} }  \\
   {u_{21} } & {u_{22} }  \\

 \end{array} } \right) - \lambda _1 \left( {\begin{array}{*{20}c}
   1 & 0  \\
   0 & 1  \\

 \end{array} } \right)} \right]\left( {\begin{array}{*{20}c}
   {\xi _1 }  \\
   {\xi _2 }  \\

 \end{array} } \right) = 0,
\]
we can get
\[
  \xi _1  = \frac{{(u_{22}  - u_{11} ) - \sqrt {(u_{11}  - u_{22} )^2  + 4u_{12} u_{21} } }}
{2}; \qquad    \xi _2  =  - u_{21}.
\]
Then the eigenvector $\tau$ corresponding to $\lambda_1$ is
\[
\tau  =  - \frac{{(\xi _1 ,\xi _2 )}} {{\sqrt {{\xi _1 }^2  + {\xi _2} ^2 } }}.
\]
We can verify Proposition \ref{prop2.1}.
\end{example}

\section{Proof of Theorem \ref{th1.1}}

Now we start to prove Theorem \ref{th1.1}.

Let $\tau (x) = \tau(D^2 u(x)) =(\tau_1, \tau_2) \in \mathbb{S}^{1}$ be the continuous eigenvector field of $D^2 u(x)$ corresponding to the largest eigenvalue. Denote
\begin{align}\label{3.2}
\Sigma =: \{x \in B_R(0): r^2 -|x|^2 + \langle x, \tau(x)\rangle^2 >0, r^2 - \langle x, \tau(x)\rangle^2 >0\},
\end{align}
where $r = \frac{1}{\sqrt 2} R$. It is easy to know, $\Sigma$ is an open set and $ B_{r}(0) \subset \Sigma \subset  B_R(0)$.
We introduce a new auxiliary function in $\Sigma$ as follows
\begin{equation} \label{3.1}
\phi(x)  = \eta(x)^\beta g(\frac{1}{2}|Du|^2 )u_{\tau \tau}
\end{equation}
where $\eta (x) = (r^2 -|x|^2 + \langle x, \tau(x)\rangle^2 )(r^2 - \langle x, \tau(x)\rangle^2)$ with $\beta = 4$ and $g(t) = e^{\frac{c_0}{r^2}t}$ with $c_0 = \frac{32}{m}$. In fact, $\langle x, \tau(x)\rangle$ is invariant under rotations of the coordinates, so is $\eta(x)$.

From the definition of $\Sigma$, we know $\eta(x) >0$ in $\Sigma$, and $\eta =0 $ on $\partial \Sigma$. Assume the maximum of $\phi(x)$ in $\Sigma$ is attains at $x_0 \in \Sigma$. By rotating the coordinates, we can assume $D^2u (x_0)$ is diagonal. In the following, we denote $\lambda_i = u_{ii}(x_0)$, $\lambda = (\lambda_1, \lambda_2)$. Without loss of generality, we can assume $\lambda_1 \geq \lambda_2 $, and $\tau(x_0) = (1, 0)$.

If $ \eta \lambda_1 \leq 10^3 (1+ M + r \sup |\nabla f| + \frac{M}{m}\frac{\sup |Du|}{r})r^4$. Then we can easily get
\begin{align*}
u_{\tau(0) \tau(0)}(0) \leq& \frac{1}{r^{4\beta}} \phi (0) \leq \frac{1}{r^{4\beta}} \phi (x_0) \\
 \leq& 10^3 (1+ M+ r \sup |\nabla f| + \frac{M}{m}\frac{\sup |Du|}{r}) e^{c_0\frac{\sup |Du|^2}{r^2}} \\
 \leq& 10^3 (1+ M + r \sup |\nabla f|) e^{(c_0+\frac{2M}{m})\frac{\sup |Du|^2}{r^2}}.
\end{align*}
Hence we get
\begin{align}
|u_{\xi \xi} (0) |\leq  u_{\tau(0) \tau(0)}(0) \leq  10^3 (1+ M + r \sup |\nabla f|) e^{(c_0+\frac{2M}{m})\frac{\sup |Du|^2}{r^2}}, \quad \forall ~\xi \in \mathbb{S}^{1}. \notag
\end{align}
Then we prove Theorem \ref{th1.1} under the condition $\eta \lambda_1 \leq 10^3 (1+ M + r \sup |\nabla f| + \frac{M}{m}\frac{\sup |Du|}{r})r^4$.

Now, we assume $\eta \lambda_1 \geq 10^3 (1+ M + r \sup |\nabla f| + \frac{M}{m}\frac{\sup |Du|}{r})r^4$. Then we have
\begin{align}\label{3.3}
\lambda_1 = \frac{\eta \lambda_1}{\eta} \geq  10^3 (1+ M + r \sup |\nabla f| + \frac{M}{m}\frac{\sup |Du|}{r}).
\end{align}
From the equation \eqref{1.1}, we have
\begin{align*}
\lambda_2 = \frac{f}{\lambda_1} \leq  \frac{M}{\lambda_1} < \lambda_1.
\end{align*}
Hence $\lambda_1 $ is distinct with the other eigenvalue, and $\tau (x)$ is $C^2$ at $x_0$. Moreover, the test function
\begin{equation} \label{3.4}
\varphi  = \beta \log \eta  + \log g(\frac{1}{2}|Du|^2 ) + \log u_{11}
\end{equation}
attains the local maximum at $x_0$. In the following, all the calculations are at $x_0$.

Then, we can get
\begin{equation}
0= \varphi _i  = \beta \frac{{\eta _i }}{\eta } + \frac{{g'}}{g}\sum\limits_k
{u_k u_{ki} }  + \frac{{u_{11i} }}{{u_{11} }} , \notag
\end{equation}
so we have
\begin{equation} \label{3.5}
\frac{{u_{11i} }}{{u_{11} }} =  - \beta  \frac{{\eta _i }}{\eta } -
\frac{{g'}}{g}u_i u_{ii}, \quad i =1, 2.
\end{equation}
At $x_0$, we also have
\begin{align*}
0 \geq \varphi _{ii}  =& \beta [ \frac{{\eta _{ii} }}{\eta } - \frac{{\eta _i ^2
}}{{\eta ^2 }}] + \frac{{g''g - g'^2 }}{{g^2 }}\sum\limits_k {u_k
u_{ki} } \sum\limits_l {u_l u_{li} } \notag \\
&+ \frac{{g'}}{g}\sum\limits_k {(u_{ki} u_{ki}  + u_k u_{kii} )}  +
\frac{{u_{11ii} }}{{u_{11} }} - \frac{{u_{11i} ^2 }}{{u_{11} ^2 }}
\notag \\
=& \beta  [\frac{{\eta _{ii} }}{\eta } - \frac{{\eta _i ^2 }}{{\eta ^2 }} ]+ \frac{{g'}}{g}[u_{ii} ^2  + \sum\limits_k {u_k u_{kii} } ] +
\frac{{u_{11ii} }}{{u_{11} }} - \frac{{u_{11i} ^2 }}{{u_{11} ^2 }},
\end{align*}
since $g''g - g'^2 =0$. Let
\begin{align*}
&F^{11} = \frac{\partial \det D^2 u}{\partial u_{11}} = \lambda_2, \quad F^{22} = \frac{\partial \det D^2 u}{\partial u_{22}} = \lambda_1, \\
&F^{12} = \frac{\partial \det D^2 u}{\partial u_{12}} = 0, \quad F^{21} = \frac{\partial \det D^2 u}{\partial u_{21}} = 0.
\end{align*}
Then from the equation \eqref{1.1} we can get
\begin{align}\label{3.6}
\lambda_2 = \frac{f}{\lambda_1}.
\end{align}
Differentiating \eqref{1.1} once, we can get
\begin{align}
F^{11} u_{11i} +F^{22} u_{22i} = f_i, \notag
\end{align}
then
\begin{align}\label{3.7}
 u_{22i} = \frac{1}{F^{22}}[f_i - F^{11} u_{11i} ] = \frac{f_i }{\lambda_1} - \frac{f}{\lambda_1} \frac{u_{11i}}{u_{11}}.
\end{align}
Differentiating \eqref{1.1} twice, we can get
\begin{align}\label{3.8}
F^{11} u_{1111} +F^{22} u_{2211}  =& f_{11} - 2\frac{\partial^2 \det D^2 u }{\partial u_{11} \partial u_{22}}u_{111}u_{221} - 2 \frac{\partial^2 \det D^2 u }{\partial u_{12} \partial u_{21}}u_{112}^2  \notag \\
=& f_{11} - 2u_{111}u_{221} + 2 u_{112}^2  \notag \\
=&f_{11} + 2 u_{112}^2 - 2u_{111}[\frac{f_1 }{\lambda_1} - \frac{f}{\lambda_1} \frac{u_{111}}{u_{11}}] \notag  \\
=&f_{11} + 2 u_{112}^2 - 2 f_1 \frac{ u_{111}}{u_{11}} + 2 f (\frac{u_{111}}{u_{11}})^2,
\end{align}
and
\begin{align}\label{3.9}
F^{11} u_{1112} +F^{22} u_{2212}=& f_{12} - \frac{\partial^2 \det D^2 u }{\partial u_{11} \partial u_{22}}u_{111}u_{222}- \frac{\partial^2 \det D^2 u }{\partial u_{22} \partial u_{11}}u_{221}u_{112} \notag\\
&- 2\frac{\partial^2 \det D^2 u }{\partial u_{12} \partial u_{21}} u_{121}u_{212}  \notag \\
=& f_{12} - u_{111}u_{222}- u_{112}u_{221} + 2 u_{112}u_{221}  \notag \\
=& f_{12} - u_{111} [\frac{f_2 }{\lambda_1} - \frac{f}{\lambda_1} \frac{u_{112}}{u_{11}}] +  u_{112}[\frac{f_1 }{\lambda_1} - \frac{f}{\lambda_1} \frac{u_{111}}{u_{11}}] \notag  \\
=&f_{12} + f_1 \frac{ u_{112}}{u_{11}} - f_2  \frac{u_{111}}{u_{11}}.
\end{align}

Hence
\begin{eqnarray} \label{3.10}
0 &\ge& \sum\limits_{i = 1}^2 {F^{ii} \varphi _{ii} }  \notag\\
&=& \beta \sum\limits_{i } {F^{ii} [\frac{{\eta _{ii} }}{\eta } -
\frac{{\eta _i ^2 }}{{\eta ^2 }}]}+ \frac{{g'}}{g}\sum\limits_{i } {F^{ii} u_{ii} ^2 } + \frac{{g'}}{g}\sum\limits_k {u_k f_{k} }  \notag \\
&& +\frac{1}{{u_{11} }}\sum\limits_{i } {F^{ii} u_{11ii} }  -
\sum\limits_{i } {F^{ii} [\frac{{u_{11i} }}{{u_{11} }}]^2 } \notag \\
&=& \beta \lambda_2 [\frac{{\eta _{11}
}}{\eta } - \frac{{\eta _1 ^2 }}{{\eta ^2 }}] +\beta \lambda_1 [\frac{{\eta _{22}
}}{\eta } - \frac{{\eta _2 ^2 }}{{\eta ^2 }}] + \frac{{g'}}{g}[\lambda_1 + \lambda_2]f + \frac{{g'}}{g} [u_1 f_{1} + u_2 f_{2} ] \notag \\
&& + \frac{1}{{u_{11} }}[f_{11} + 2 u_{112}^2 - 2 f_1 \frac{ u_{111}}{u_{11}} + 2 f (\frac{u_{111}}{u_{11}})^2]  - \lambda_2 [\frac{{u_{111} }}{{u_{11} }}]^2  - \lambda_1 [\frac{{u_{112} }}{{u_{11} }}]^2 \notag \\
&\geq&  \beta [\frac{f}{\lambda_1}\frac{{\eta _{11}}}{\eta } + \lambda_1 \frac{{\eta _{22}
}}{\eta } ] - \beta \frac{f}{\lambda_1}\frac{{\eta _1 ^2 }}{{\eta ^2 }} - \beta \lambda_1 \frac{{\eta _2 ^2 }}{{\eta ^2 }} \notag \\
&&+ \frac{f}{ \lambda_1} [\frac{{u_{111} }}{{u_{11} }}]^2  -2 \frac{f_1}{{u_{11} }} \frac{ u_{111}}{u_{11}}+ \frac{ \lambda_1 }{2}[\frac{{u_{112} }}{{u_{11} }}]^2+ \frac{ \lambda_1 }{2}[\beta\frac{{\eta_{2} }}{{\eta }} + \frac{g'}{g} u_2 u_{22}]^2 \notag  \\
&&+ \frac{{g'}}{g}f \lambda_1 - \frac{{g'}}{g} |\nabla u| |\nabla f| - \frac{| f_{11} |}{{\lambda_1 }} \notag \\
&\geq&  \beta [\frac{f}{\lambda_1}\frac{{\eta _{11}}}{\eta } + \lambda_1 \frac{{\eta _{22}
}}{\eta } ] - \beta \frac{f}{\lambda_1}\frac{{\eta _1 ^2 }}{{\eta ^2 }} - \beta \lambda_1 \frac{{\eta _2 ^2 }}{{\eta ^2 }} \notag \\
&&+ \frac{f}{ 2\lambda_1} [\frac{{u_{111} }}{{u_{11} }}]^2  + \frac{ \lambda_1 }{2}[\frac{{u_{112} }}{{u_{11} }}]^2 + \frac{ \beta^2 }{2}\lambda_1 \frac{{\eta_{2}^2 }}{{\eta^2 }} + \beta f \frac{g'}{g}  \frac{{\eta_{2} }}{{\eta }} u_2\notag  \\
&&+ \frac{{g'}}{g}f \lambda_1 - \frac{{g'}}{g} |\nabla u| |\nabla f| - \frac{| f_{11} |}{{\lambda_1 }}- 2\frac{f_1^2}{f\lambda_1}.
\end{eqnarray}

\begin{lemma}\label{lem3.1}
Under the condition $ \eta \lambda_1 \geq 10^3 (1+ M + r \sup |\nabla f| + \frac{M}{m}\frac{\sup |Du|}{r})r^4$, we have at $x_0$
\begin{align}
\label{3.11}&\beta \frac{f}{\lambda_1}\frac{{\eta _1 ^2 }}{{\eta ^2 }} \leq \frac{8\beta f r^6 }{\eta^2\lambda_1} +\frac{\lambda_1}{4}(\frac{u_{112}} {{u_{11}  }})^2;   \\
\label{3.12}&\beta f \frac{g'}{g}  \frac{{\eta_{2} }}{{\eta }} u_2 \geq - 4\beta f \frac{g'}{g} \frac{ r^4}{\eta} \frac{|u_2|}{r};
\end{align}
and
\begin{align}\label{3.13}
\beta [\frac{f}{\lambda_1}\frac{{\eta _{11}}}{\eta } + \lambda_1 \frac{{\eta _{22}
}}{\eta } ]
\geq& -  \frac{1}{2}\frac{{g'}}{g} f\lambda_1-\beta \lambda_1 [\frac{\eta_2}{\eta}]^2 - \frac{f}{2\lambda_1}(\frac{u_{111}}{u_{11}})^2 -\frac{\lambda_1}{4}(\frac{u_{112}} {{u_{11} }})^2  \notag\\
&- 2\beta f \frac{{r^2}}{\eta \lambda_1} - 4\beta |f_{12}| \frac{{r^4}}{\eta \lambda_1} -32 \beta \frac{f_1 ^2 r^4}{ \eta \lambda_1^3} -8\beta \frac{|f_1 f_2| r^4}{ \eta \lambda_1^3} -24 \beta \frac{ |f_1|r^3}{ \eta \lambda_1} \notag \\
&- [\frac{6 \beta |f_1|^2r^4}{\eta \lambda_1} +\frac{12 \beta f r^2}{\eta \lambda_1} ]  -  [ \frac{{8\beta f r^4 |f_2|^2}}{\eta \lambda_1^3}+ \frac{{(48 \beta)^2 f r^6  }}{\eta^2\lambda_1 }+ \frac{{32 \beta^2 |f_{2}|^2 r^8  }}{\eta^2\lambda_1 f}].
\end{align}
\end{lemma}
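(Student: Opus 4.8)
The plan is to compute $\eta_1,\eta_2,\eta_{11},\eta_{22}$ explicitly at $x_0$ and then bound every term of \eqref{3.13} against the four ``good'' quantities $\tfrac{f}{2\lambda_1}\bigl(\tfrac{u_{111}}{u_{11}}\bigr)^2$, $\tfrac{\lambda_1}{4}\bigl(\tfrac{u_{112}}{u_{11}}\bigr)^2$, $\beta\lambda_1\bigl[\tfrac{\eta_2}{\eta}\bigr]^2$ and $\tfrac12\tfrac{g'}{g}f\lambda_1$. All computations are at $x_0$, where $D^2u$ is diagonal, $\tau(x_0)=(1,0)$, and the standing assumption together with \eqref{3.3}, \eqref{3.6} and $0<\eta=ab\le r^4$ gives $\lambda_1\ge 10^3\bigl(1+M+r\sup|\nabla f|+\tfrac Mm\tfrac{\sup|Du|}{r}\bigr)$, $0<\lambda_2=f/\lambda_1\le M/\lambda_1$, $\lambda_1-\lambda_2\ge\tfrac12\lambda_1$, while $\langle x_0,\tau(x_0)\rangle=(x_0)_1$ forces $a=r^2-(x_0)_2^2$, $b=r^2-(x_0)_1^2$, hence $(x_0)_1^2,(x_0)_2^2<r^2$ and $a,b\ge\eta/r^2$. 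Writing $\eta=ab$, $a=r^2-|x|^2+s^2$, $b=r^2-s^2$, $s=\langle x,\tau\rangle$, and using that $\tau=\tau^1$ depends on $x$ only through $W=D^2u$, I differentiate by the chain rule $\partial_\ell(\cdot)=\sum_{p,q}\tfrac{\partial(\cdot)}{\partial W_{pq}}u_{pq\ell}$ and Proposition~\ref{prop2.1} with $k=1$ (legitimate because $\lambda_1$ is now strictly the largest eigenvalue): at $x_0$, $\partial_\ell\tau_1=0$, $\partial_\ell\tau_2=\tfrac{u_{12\ell}}{\lambda_1-\lambda_2}$, and the second derivatives of $\tau$ are the formulas \eqref{2.2}--\eqref{2.5} contracted against $D^3u$, together with the \emph{only} genuinely fourth-order contributions $\partial_1^2\tau_2=\tfrac{u_{1112}}{\lambda_1-\lambda_2}+O(|D^3u|^2)$ and $\partial_2^2\tau_2=\tfrac{u_{1222}}{\lambda_1-\lambda_2}+O(|D^3u|^2)$. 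Since $s(x_0)=(x_0)_1$ (so $b-a=(x_0)_2^2-(x_0)_1^2$) the expressions collapse: $\eta_1=-2(x_0)_1a+\tfrac{K}{\lambda_1-\lambda_2}u_{112}$, $\eta_2=-2(x_0)_2b+\tfrac{K}{\lambda_1-\lambda_2}u_{122}$, $\eta_{11}=-2a+\tfrac{K}{\lambda_1-\lambda_2}u_{1112}+O(|D^3u|^2)$, $\eta_{22}=-2b+\tfrac{K}{\lambda_1-\lambda_2}u_{1222}+O(|D^3u|^2)$, where $K:=2(x_0)_1(x_0)_2\bigl((x_0)_2^2-(x_0)_1^2\bigr)$, $|K|\le 2r^4$.

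Granting these formulas, \eqref{3.11} and \eqref{3.12} are quick. For \eqref{3.11}, write $u_{112}=u_{11}\tfrac{u_{112}}{u_{11}}$; then $(p+q)^2\le2p^2+2q^2$ and the size bounds above give $\eta_1^2\le 8r^6+32\,r^8\bigl(\tfrac{u_{112}}{u_{11}}\bigr)^2$, and multiplying by $\tfrac{\beta f}{\lambda_1\eta^2}$ and absorbing the second term into $\tfrac{\lambda_1}{4}\bigl(\tfrac{u_{112}}{u_{11}}\bigr)^2$ — which works because $(\lambda_1\eta)^2\ge 10^6(1+M)^2r^8\ge 512\,f\,r^8$, using $(1+M)^2\ge2M\ge2f$ — yields \eqref{3.11}. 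For \eqref{3.12} it is enough to show $|\eta_2|\le4r^3$: the term $-2(x_0)_2b$ has modulus $\le2r^3$, and the second term is $\le\tfrac{4r^4}{\lambda_1}|u_{122}|$, while \eqref{3.7} ($u_{122}=u_{221}=\tfrac{f_1}{\lambda_1}-\lambda_2\tfrac{u_{111}}{u_{11}}$) and \eqref{3.5} ($\tfrac{u_{111}}{u_{11}}=-\beta\tfrac{\eta_1}{\eta}-\tfrac{g'}{g}u_1\lambda_1$), together with the largeness of $\lambda_1$, force $|u_{122}|\le\tfrac{\lambda_1}{2r}$ (up to a tiny multiple of $\bigl|\tfrac{u_{112}}{u_{11}}\bigr|$, negligible next to $\tfrac{\lambda_1}{4}\bigl(\tfrac{u_{112}}{u_{11}}\bigr)^2$).

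Inequality \eqref{3.13} is the core. First I eliminate the fourth order: in $\beta\bigl[\tfrac{f}{\lambda_1}\tfrac{\eta_{11}}{\eta}+\lambda_1\tfrac{\eta_{22}}{\eta}\bigr]$ I replace $\lambda_1u_{1222}$ by $f_{12}+f_1\tfrac{u_{112}}{u_{11}}-f_2\tfrac{u_{111}}{u_{11}}-\lambda_2u_{1112}$ from \eqref{3.9} (using $u_{2212}=u_{1222}$); the $\lambda_2u_{1112}$ term then carries the coefficient $-\tfrac{\beta K\lambda_2}{(\lambda_1-\lambda_2)\eta}=-\tfrac{\beta Kf}{\lambda_1(\lambda_1-\lambda_2)\eta}$, which \emph{exactly} cancels the $u_{1112}$ term arising from $\beta\tfrac{f}{\lambda_1}\tfrac{\eta_{11}}{\eta}$; no fourth derivatives survive, only the bounded quantity $\tfrac{\beta K}{(\lambda_1-\lambda_2)\eta}\bigl(f_{12}+f_1\tfrac{u_{112}}{u_{11}}-f_2\tfrac{u_{111}}{u_{11}}\bigr)$. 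Next the leading geometric parts: $\tfrac{f}{\lambda_1}\tfrac{\eta_{11}}{\eta}$ has main term $-\tfrac{2f}{\lambda_1 b}\ge-\tfrac{2fr^2}{\lambda_1\eta}$ (since $b\ge\eta/r^2$), matching the $-2\beta f\tfrac{r^2}{\eta\lambda_1}$ term; and $\lambda_1\tfrac{\eta_{22}}{\eta}$ has main term $-\tfrac{2\lambda_1}{a}$, which I control by a dichotomy on the size of $a=r^2-(x_0)_2^2$ relative to $r^2$: when $a$ is bounded below by a fixed fraction of $r^2$, $-\tfrac{2\beta\lambda_1}{a}$ is dominated by $-\tfrac12\tfrac{g'}{g}f\lambda_1$ (here the calibration $\tfrac{g'}{g}=\tfrac{c_0}{r^2}$, $c_0=\tfrac{32}{m}$, $\beta=4$ and $f\ge m$ are exactly what make the constants close); when $a$ is small, $(x_0)_2^2=r^2-a$ is a definite fraction of $r^2$ and the main term of $\eta_2$ is $-2(x_0)_2b$, so $\tfrac{4(x_0)_2^2}{a^2}\ge\tfrac2a$ and $-\tfrac{2\beta\lambda_1}{a}$ is dominated by $-\beta\lambda_1\bigl[\tfrac{\eta_2}{\eta}\bigr]^2$. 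Everything else — the $O(|D^3u|^2)$ pieces of $\eta_{11},\eta_{22}$ (products of $\tfrac{u_{111}}{u_{11}},\tfrac{u_{112}}{u_{11}},\tfrac{u_{122}}{\lambda_1-\lambda_2}$ after inserting \eqref{3.5}, \eqref{3.7}), the $f_1\tfrac{u_{112}}{u_{11}},f_2\tfrac{u_{111}}{u_{11}}$ remainder above, and the $u_{122}$-correction to the geometric subtraction — is absorbed by Young's inequality against $\tfrac{f}{2\lambda_1}\bigl(\tfrac{u_{111}}{u_{11}}\bigr)^2$ and $\tfrac{\lambda_1}{4}\bigl(\tfrac{u_{112}}{u_{11}}\bigr)^2$, the room always coming from $\lambda_1$ large and $\lambda_1\eta$ large; this generates exactly the explicit $\tfrac{r^k}{\eta\lambda_1}$, $\tfrac{r^k}{\eta\lambda_1^3}$, $\tfrac{r^k}{\eta^2\lambda_1}$, $\tfrac{r^k}{\eta^2\lambda_1 f}$ error terms with $f$-derivative factors on the right of \eqref{3.13}.

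The main obstacle is the fourth-order cancellation together with the sheer amount of bookkeeping: one must compute the precise coefficients, signs included, of $u_{1112}$ in $\eta_{11}$ and of $u_{1222}$ in $\eta_{22}$ to see that after \eqref{3.9} they annihilate — there is no good fourth-order term to soak up any leftover, so an imprecision here is fatal — and then one must herd the many lower-order terms into the fixed budget $\tfrac{f}{2\lambda_1}\bigl(\tfrac{u_{111}}{u_{11}}\bigr)^2$, $\tfrac{\lambda_1}{4}\bigl(\tfrac{u_{112}}{u_{11}}\bigr)^2$, $\beta\lambda_1\bigl[\tfrac{\eta_2}{\eta}\bigr]^2$, $\tfrac12\tfrac{g'}{g}f\lambda_1$. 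The dichotomy for $\lambda_1\tfrac{\eta_{22}}{\eta}$ is also delicate, and is the reason the auxiliary $\eta$ must carry the $\langle x,\tau\rangle^2$ factors and the reason for the particular choices of $\beta$ and $c_0$.
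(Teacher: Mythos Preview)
Your sketch is the paper's proof: explicit differentiation of $\eta$ at $x_0$ via Proposition~\ref{prop2.1}, the fourth-order cancellation in $\tfrac{f}{\lambda_1}\eta_{11}+\lambda_1\eta_{22}$ through \eqref{3.9}, the dichotomy on $a=r^2-(x_0)_2^2$ versus $r^2/2$ to dominate $-2\beta\lambda_1/a$ either by $\tfrac12\tfrac{g'}{g}f\lambda_1$ or by $\beta\lambda_1(\eta_2/\eta)^2$, and Young absorptions for the remainder. Two places need tightening. First, your displayed formulas $\eta_{ii}=-2(\cdot)+\tfrac{K}{\lambda_1-\lambda_2}u_{1ii2}+O(|D^3u|^2)$ omit the terms \emph{linear} in $D^3u$ (namely $(4x_2^2-12x_1^2)\langle x,\partial_1\tau\rangle$ in $\eta_{11}$ and $8x_1x_2\langle x,\partial_2\tau\rangle$ in $\eta_{22}$), which are not $O(|D^3u|^2)$; they are harmless but must be carried through the bookkeeping. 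Second, for \eqref{3.12} you cannot park the residual $u_{112}$ dependence in $\tfrac{\lambda_1}{4}(\tfrac{u_{112}}{u_{11}})^2$, since \eqref{3.12} offers no such term; the paper instead substitutes $\tfrac{u_{112}}{u_{11}}=-\beta\tfrac{\eta_2}{\eta}-\tfrac{g'}{g}u_2\lambda_2$ back in and solves the resulting linear equation for $\tfrac{\eta_2}{\eta}$ (the coefficient $1+\beta^2(\cdot)^2\tfrac{f}{(\lambda_1-\lambda_2)^2}\ge1$ is then discarded), and this same identity is what justifies the lower bound $(\eta_2/\eta)^2\ge\tfrac12\bigl[\tfrac{2x_2b}{\eta}\bigr]^2$ needed in the small-$a$ branch of the dichotomy.
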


\begin{proof}
At $x_0$, $\tau=(\tau_1, \tau_2)= (1, 0)$.
Then from Proposition \ref{prop2.1} we get
\begin{align}\label{3.14}
\left\langle {x,\partial _i \tau } \right\rangle  =&  \sum\limits_{m = 1}^2 {x_m \frac{{\partial \tau _m }}
{{\partial x_i }}}  = \sum\limits_{m = 1}^2 {x_m \frac{{\partial \tau _m }}
{{\partial u_{pq} }}u_{pqi}} = x_2 \frac{{\partial \tau _2 }}
{{\partial u_{pq} }}u_{pqi}   \notag \\
=& x_2 \frac{u_{12i}} {{\lambda_1 - \lambda_2 }}, \quad i = 1, 2.
\end{align}

From the definition of $\eta$, then we have at $x_0$
\begin{align}\label{3.15}
\eta  = [r^2  - |x|^2  + \left\langle {x,\tau } \right\rangle ^2 ][r^2  - \left\langle {x,\tau } \right\rangle ^2 ] =  (r^2  - x_2 ^2)(r^2  - x_1 ^2).
\end{align}
Taking first derivative of $\eta$, we can get
\begin{align*}
\eta _i =& [ - 2x_i  + 2\left\langle {x,\tau } \right\rangle \left\langle {x,\tau } \right\rangle _i ][r^2  - \left\langle {x,\tau } \right\rangle ^2 ]\\
   &+ [r^2  - |x|^2  + \left\langle {x,\tau } \right\rangle ^2 ][ - 2\left\langle {x,\tau } \right\rangle \left\langle {x,\tau } \right\rangle _i ] \\
   =& [ - 2x_i  + 2x_1 ( \delta_{i1}+ \left\langle {x, \partial_i \tau } \right\rangle ) ][r^2  - x_1 ^2 ] +( r^2 -x_2^2) [ - 2x_1  ( \delta_{i1} + \left\langle {x, \partial_i \tau } \right\rangle )  ] \hfill \\
   =& \left\{ \begin{array}{l}
  - 2x_1(r^2  - x_2 ^2 )+  2x_1 \left\langle {x, \partial_1 \tau } \right\rangle (x_2^2 - x_1 ^2), \quad i =1;\\
  - 2x_2(r^2  - x_1 ^2 )+  2x_1 \left\langle {x, \partial_2 \tau } \right\rangle (x_2^2 - x_1 ^2), \quad i =2.\\
 \end{array} \right.
\end{align*}
Hence
\begin{align} \label{3.16}
\beta \frac{f}{\lambda_1}\frac{{\eta _1 ^2 }}{{\eta ^2 }} =&  \beta \frac{f}{\lambda_1} [\frac{- 2x_1(r^2  - x_2^2)}{\eta} + 2x_1 x_2 \frac{x_2^2 - x_1 ^2}{\eta} \frac{u_{112}} {{\lambda_1 - \lambda_2 }}]^2  \notag \\
\leq& \beta \frac{f}{\lambda_1} [\frac{8r^6 }{\eta^2} + \frac{8r^8 }{\eta^2} (\frac{u_{112}} {{u_{11}  }})^2]\notag \\
\leq& \frac{8\beta f r^6 }{\eta^2\lambda_1} +\frac{\lambda_1}{4}(\frac{u_{112}} {{u_{11}  }})^2.
\end{align}
Also we have
\begin{align}
\frac{{\eta _2 }}{{\eta }} =&\frac{- 2x_2(r^2  - x_1^2)}{\eta} + 2x_1 x_2 \frac{x_2^2 - x_1 ^2}{\eta} \frac{u_{221}} {{\lambda_1 - \lambda_2 }} \notag \\
=&\frac{- 2x_2(r^2  - x_1^2)}{\eta} + 2x_1 x_2 \frac{x_2^2 - x_1 ^2}{\eta} \frac{1} {{\lambda_1 - \lambda_2 }} [ \frac{f_1 }{\lambda_1} - \frac{f}{\lambda_1}\frac{u_{111}} {{u_{11}  }}]\notag \\
=&\frac{- 2x_2(r^2  - x_1^2)}{\eta} [1- x_1 \frac{(x_2^2 - x_1 ^2)(r^2 - x_2^2)}{\eta} \frac{1} {{\lambda_1 - \lambda_2 }}\frac{f_1 }{\lambda_1}] \notag \\
&+ 2x_1 x_2 \frac{x_2^2 - x_1 ^2}{\eta} \frac{1} {{\lambda_1 - \lambda_2 }} \frac{f}{\lambda_1}[ \beta \frac{{\eta _1 }}{{\eta }} + \frac{{g'}}{g}u_1 u_{11}]\notag \\
=&\frac{- 2x_2(r^2  - x_1^2)}{\eta} [1- x_1 \frac{(x_2^2 - x_1 ^2)(r^2 - x_2^2)}{\eta} \frac{1} {{\lambda_1 - \lambda_2 }}(\frac{f_1 }{\lambda_1}+  \frac{{g'}}{g}u_1 f)] \notag \\
&+ 2x_1 x_2 \frac{x_2^2 - x_1 ^2}{\eta} \frac{1} {{\lambda_1 - \lambda_2 }} \frac{f}{\lambda_1} \beta [\frac{- 2x_1(r^2  - x_2^2)}{\eta} + 2x_1 x_2 \frac{x_2^2 - x_1 ^2}{\eta} \frac{u_{112}} {{\lambda_1 - \lambda_2 }}]\notag \\
=&\frac{- 2x_2(r^2  - x_1^2)}{\eta} [1- x_1 \frac{(x_2^2 - x_1 ^2)(r^2 - x_2^2)}{\eta} \frac{1} {{\lambda_1 - \lambda_2 }}(\frac{f_1 }{\lambda_1}+  \frac{{g'}}{g}u_1 f - \frac{f}{\lambda_1} \beta \frac{2x_1(r^2  - x_2^2)}{\eta})] \notag \\
&+[ 2x_1 x_2 \frac{x_2^2 - x_1 ^2}{\eta} ]^2 \frac{f} {{(\lambda_1 - \lambda_2 )^2}} \beta [ - \beta \frac{ \eta_{2}}{ \eta} - \frac{{g'}}{g}u_2 u_{22}], \notag
\end{align}
then we can get
\begin{align}\label{3.17}
&[1+ \beta ^2 ( 2x_1 x_2 \frac{x_2^2 - x_1 ^2}{\eta} )^2 \frac{f} {{(\lambda_1 - \lambda_2 )^2}} ]\frac{{\eta _2 }}{{\eta }}  \notag \\
=&\frac{- 2x_2(r^2  - x_1^2)}{\eta} [1- x_1 \frac{(x_2^2 - x_1 ^2)(r^2 - x_2^2)}{\eta} \frac{1} {{\lambda_1 - \lambda_2 }}(\frac{f_1 }{\lambda_1}+  \frac{{g'}}{g}u_1 f - \frac{f}{\lambda_1} \beta \frac{2x_1(r^2  - x_2^2)}{\eta})] \notag \\
&+\frac{- 2x_2(r^2  - x_1^2)}{\eta} \frac{2x_1^2x_2(x_2^2 - x_1 ^2)^2 (r^2  - x_2^2)}{\eta^2}  \frac{f} {{(\lambda_1 - \lambda_2 )^2}} \beta  \frac{{g'}}{g}u_2\frac{f} {{\lambda_1}}.
\end{align}
Hence
\begin{align}
\beta f \frac{g'}{g}  \frac{{\eta_{2} }}{{\eta }} u_2
\geq& - \beta f \frac{g'}{g} \frac{ 2 r^3}{\eta}[ 1+ \frac{ 2 r^5}{\eta \lambda_1} (\frac{|f_1 |}{\lambda_1}+  \frac{{g'}}{g} |u_1| f + \frac{2 \beta f r^3}{\eta\lambda_1} ) +\frac{16 \beta r^9}{\eta^2}  \frac{f^2} {{\lambda_1^3}}   \frac{{g'}}{g} |u_2| ] |u_2| \notag \\
\geq& - \beta f \frac{g'}{g} \frac{ 2 r^3}{\eta}[ 1+ \frac{1}{4} +  \frac{1}{4} + \frac{1}{4} + \frac{1}{4} ] |u_2|\notag \\
=& - 4\beta f \frac{g'}{g} \frac{ r^4}{\eta} \frac{|u_2|}{r}.
\end{align}
In fact, $ \frac{{\eta _2 }}{{\eta  }} \approx \frac{- 2x_2(r^2  - x_1^2)}{\eta}$ if $\eta \lambda_1$ is big enough. And we can get from \eqref{3.17}
\begin{align}
 \frac{{\eta _2 ^2 }}{{\eta ^2 }} \geq&  \left\{[1+ \beta ^2 ( 2x_1 x_2 \frac{x_2^2 - x_1 ^2}{\eta} )^2 \frac{f} {{(\lambda_1 - \lambda_2 )^2}} ]\frac{{\eta _2 }}{{\eta }} \right\}^2 [1- \beta ^2 ( 2x_1 x_2 \frac{x_2^2 - x_1 ^2}{\eta} )^2 \frac{f} {{(\lambda_1 - \lambda_2 )^2}} ]^2 \notag \\
 \geq& \left[\frac{- 2x_2(r^2  - x_1^2)}{\eta} \right]^2 [ 1- \frac{ 2r^5}{\eta \lambda_1} (\frac{|f_1 |}{\lambda_1}+  \frac{{g'}}{g} |u_1| f + \frac{2 \beta f r^3}{\eta\lambda_1} )-\frac{8\beta r^9}{\eta^2 \lambda_1^2}\frac{{g'}}{g}|u_2|\frac{f^2} {{\lambda_1}} ]^2[1- \beta ^2  \frac{16 r^8}{\eta^2}  \frac{f} {{\lambda_1^2}} ]^2 \notag \\
 \geq& \left[\frac{- 2x_2(r^2  - x_1^2)}{\eta} \right]^2 [ 1- \frac{1}{10^3} - \frac{64}{10^3}- \frac{1}{10}- \frac{1}{10^3}]^2[1- \frac{1}{10^3}]^2 \notag \\
 \geq& \frac{1}{2}\left[\frac{- 2x_2(r^2  - x_1^2)}{\eta} \right]^2. \notag
\end{align}

Taking second derivatives of $\eta$, we can get
\begin{align*}
\eta _{ii}  =& [ - 2 + 2\left\langle {x,\tau } \right\rangle \left\langle {x,\tau } \right\rangle _{ii}  + 2\left\langle {x,\tau } \right\rangle _i \left\langle {x,\tau } \right\rangle _i ][r^2  - \left\langle {x,\tau } \right\rangle ^2 ]  \\
&+ 2[ - 2x_i  + 2\left\langle {x,\tau } \right\rangle \left\langle {x,\tau } \right\rangle _i ][ - 2\left\langle {x,\tau } \right\rangle \left\langle {x,\tau } \right\rangle _i ]  \\
&+ [r^2  - |x|^2  + \left\langle {x,\tau } \right\rangle ^2 ][ - 2\left\langle {x,\tau } \right\rangle \left\langle {x,\tau } \right\rangle _{ii}  - 2\left\langle {x,\tau } \right\rangle _i \left\langle {x,\tau } \right\rangle _i ]  \\
=& [ - 2 + 2x_1 \left\langle {x,\tau } \right\rangle _{ii}  + 2(\delta_{i1} +  \left\langle {x, \partial_i \tau } \right\rangle)^2 ][r^2  - x_1 ^2 ]  \\
&+ 2[ - 2x_i  + 2x_1 (\delta_{i1} +  \left\langle {x, \partial_i \tau } \right\rangle) ][ - 2x_1 (\delta_{i1} +  \left\langle {x, \partial_i \tau } \right\rangle) ] \\
&+ (r^2 -x_2^2)[ - 2x_1 \left\langle {x,\tau } \right\rangle _{ii}  - 2(\delta_{i1} +  \left\langle {x, \partial_i \tau } \right\rangle)^2 ],
\end{align*}
so
\begin{align}
\label{3.19}\eta _{11}  =& - 2(r^2-x_2^2) - 2x_1 (x_1^2 -x_2^2 )\left\langle {x,\tau } \right\rangle _{11} \notag \\
&+ ( 4x_2^2 -12 x_1^2 )\left\langle {x, \partial_1 \tau } \right\rangle + ( 2x_2^2 -10 x_1^2 )  \left\langle {x, \partial_1 \tau } \right\rangle^2,  \\
\label{3.20}\eta _{22}  =& - 2(r^2  - x_1 ^2 ) - 2x_1 (x_1^2 -x_2^2 )\left\langle {x,\tau } \right\rangle _{22} \notag \\
&+ 8 x_1 x_2  \left\langle {x, \partial_2 \tau } \right\rangle + ( 2x_2^2 -10 x_1^2 ) \left\langle {x, \partial_2 \tau } \right\rangle^2.
\end{align}
Hence
\begin{align}\label{3.21}
\beta [\frac{f}{\lambda_1}\frac{{\eta _{11}}}{\eta } + \lambda_1 \frac{{\eta _{22}
}}{\eta } ] =& -2\beta [\frac{f}{\lambda_1}\frac{{r^2-x_2^2}}{\eta } + \lambda_1 \frac{{r^2-x_1^2}}{\eta } ]  \notag\\
&-2\beta \frac{{x_1 (x_1^2 -x_2^2 )}}{\eta } [\frac{f}{\lambda_1}\left\langle {x,\tau } \right\rangle _{11} + \lambda_1 \left\langle {x,\tau } \right\rangle _{22}]  \notag \\
&+\beta\frac{f}{\lambda_1} [ \frac{{x_2 ( 4x_2^2 -12 x_1^2 ) }}{\eta }\frac{u_{112}} {{\lambda_1 - \lambda_2 }}+\frac{{x_2^2 (2x_2^2 -10 x_1^2) }}{\eta }(\frac{u_{112}} {{\lambda_1 - \lambda_2 }})^2 ] \\
& +\beta \lambda_1 [ \frac{{ 8x_1 x_2^2}}{\eta } \frac{u_{221}} {{\lambda_1 - \lambda_2 }} + \frac{{x_2^2 ( 2x_2^2 -10 x_1^2 ) }}{\eta }(\frac{u_{221}} {{\lambda_1 - \lambda_2 }})^2  ]. \notag
\end{align}
Direct calculations yield
\begin{align*}
 \left\langle {x,\tau } \right\rangle _{11}  =& \frac{{\partial ^2 }}
{{\partial x_1 ^2 }}[\sum\limits_{m = 1}^2 {x_m \tau _m } ] = 2\frac{{\partial \tau _1 }}
{{\partial x_1 }} + \sum\limits_{m = 1}^2 {x_m \frac{{\partial ^2 \tau _m }}
{{\partial x_1 ^2 }}}  \\
=& 2\frac{{\partial \tau _1 }}
{{\partial u_{pq} }}u_{pq1}  + \sum\limits_{m = 1}^2 {x_m [\frac{{\partial \tau _m }}
{{\partial u_{pq} }}u_{pq11}  + \frac{{\partial ^2 \tau _m }}
{{\partial u_{pq} \partial u_{rs} }}u_{pq1} u_{rs1} ]}  \\
=& 0+x_1 \frac{{\partial ^2 \tau _1 }}{{\partial u_{pq} \partial u_{rs} }}u_{pq1} u_{rs1} +x_2 [\frac{{\partial \tau _2 }}
{{\partial u_{pq} }}u_{pq11}  + \frac{{\partial ^2 \tau _2 }}
{{\partial u_{pq} \partial u_{rs} }}u_{pq1} u_{rs1} ] \\
=&  - x_1\left[ {\frac{{u_{112} }} {{\lambda _1  - \lambda _2 }}} \right] ^2 + x_2 \left[\frac{{ 1}} {{\lambda _1  - \lambda _2 }} \right] u_{1211}+ 2x_2 \left[-\frac{{ u_{112} u_{111} }} {{(\lambda _1  - \lambda _2)^2 }} +\frac{{ u_{112} u_{221} }} {{(\lambda _1  - \lambda _2)^2 }} \right],
\end{align*}
Similarly, we have
\begin{align*}
\left\langle {x,\tau } \right\rangle _{22}  =& \frac{{\partial ^2 }}
{{\partial x_2 ^2 }}[\sum\limits_{m = 1}^2 {x_m \tau _m } ] = 2\frac{{\partial \tau _2 }}
{{\partial x_2 }} + \sum\limits_{m = 1}^2 {x_m \frac{{\partial ^2 \tau _m }}
{{\partial x_1 ^2 }}}  \\
=& 2\frac{{\partial \tau _2 }}
{{\partial u_{pq} }}u_{pq2}  + \sum\limits_{m = 1}^2 {x_m [\frac{{\partial \tau _m }}
{{\partial u_{pq} }}u_{pq22}  + \frac{{\partial ^2 \tau _m }}
{{\partial u_{pq} \partial u_{rs} }}u_{pq2} u_{rs2} ]}  \\
=& 2\left[  \frac{{1 }}
{{\lambda _1  - \lambda _2}} \right]u_{221}  - x_1\left[ {\frac{{u_{221} }} {{\lambda _1  - \lambda _2 }}} \right] ^2 \\
 &+ x_2 \left[\frac{{ 1}} {{\lambda _1  - \lambda _2 }} \right] u_{1222}+ 2x_2 \left[-\frac{{ u_{112} u_{221} }} {{(\lambda _1  - \lambda _2)^2 }} +\frac{{ u_{222} u_{221} }} {{(\lambda _1  - \lambda _2)^2 }} \right],
\end{align*}
then
\begin{align}\label{3.22}
 \frac{f}{\lambda_1}\left\langle {x,\tau } \right\rangle _{11} + \lambda_1 \left\langle {x,\tau } \right\rangle _{22} =&  - x_1\frac{f}{\lambda_1}\left[ {\frac{{u_{112} }} {{\lambda _1  - \lambda _2 }}} \right] ^2 + 2\lambda_1\left[  \frac{{ u_{221}}}
{{\lambda _1  - \lambda _2}} \right] - x_1 \lambda_1\left[ {\frac{{u_{221} }} {{\lambda _1  - \lambda _2 }}} \right] ^2 \notag \\
&+ x_2 \left[\frac{{ 1}} {{\lambda _1  - \lambda _2 }} \right] [f_{12} + f_1 \frac{ u_{112}}{u_{11}} - f_2 ( \frac{u_{111}}{u_{11}})] \notag \\
&+ 2x_2 \left[-\frac{{ u_{112} }} {{(\lambda _1  - \lambda _2)^2 }}f_{1} +\frac{{ u_{221} }} {{(\lambda _1  - \lambda _2)^2 }}f_{2} \right].
\end{align}

From \eqref{3.21} and \eqref{3.22}, we can get
\begin{align*}
&\beta [\frac{f}{\lambda_1}\frac{{\eta _{11}}}{\eta } + \lambda_1 \frac{{\eta _{22}
}}{\eta } ] \\
=& -2\beta [\frac{f}{\lambda_1}\frac{{r^2-x_2^2}}{\eta } + \lambda_1 \frac{{r^2-x_1^2}}{\eta } ] -2\beta \frac{{x_1 x_2(x_1^2 -x_2^2 )}}{\eta } \left[\frac{{ 1}} {{\lambda _1  - \lambda _2 }} \right] [f_{12} - f_2 ( \frac{u_{111}}{u_{11}})] \notag \\
&+(\frac{u_{112}} {{\lambda_1 - \lambda_2 }})^2  [2\beta \frac{f}{\lambda_1} \frac{{x_1^2 (x_1^2 -x_2^2 )}}{\eta }+\beta\frac{f}{\lambda_1}\frac{{x_2^2 ( 2x_2^2 -10 x_1^2 ) }}{\eta }] \\
&+\frac{u_{112}} {{\lambda_1 - \lambda_2 }} [-2\beta\frac{{x_1 (x_1^2 -x_2^2 )}}{\eta }(x_2\frac{f_1}{\lambda_1}-2x_2\frac{f_1}{\lambda_1 - \lambda_2 })+\beta\frac{f}{\lambda_1}\frac{{x_2 ( 4x_2^2 -12 x_1^2 ) }}{\eta }]  \\
& +(\frac{u_{221}} {{\lambda_1 - \lambda_2 }})^2 [2\beta \lambda_1\frac{{x_1^2 (x_1^2 -x_2^2 )}}{\eta }+\beta \lambda_1\frac{{ x_2^2 (2x_2^2 -10 x_1^2 ) }}{\eta }] \\
&+\frac{u_{221}} {{\lambda_1 - \lambda_2 }}  [-2\beta \frac{{x_1 (x_1^2 -x_2^2 )}}{\eta }(2 \lambda_1 + 2x_2\frac{f_2}{\lambda_1 - \lambda_2 })+\beta \lambda_1 \frac{{8x_1 x_2^2 }}{\eta } ]  \\
\geq& -2\beta \lambda_1 \frac{{r^2-x_1^2}}{\eta } - 2\beta f \frac{{r^2}}{\eta \lambda_1} - 2\beta |f_{12}| \frac{{r^4}}{\eta (\lambda_1-\lambda_2)}- 2\beta |f_{2}| \frac{{r^4}}{\eta (\lambda_1-\lambda_2)} | \frac{u_{111}}{u_{11}}|\\
& -(\frac{u_{112}} {{(\lambda_1-\lambda_2) }})^2  \beta \frac{f}{\lambda_1} \frac{{8r^4}}{\eta } -|\frac{u_{112}} {{\lambda_1-\lambda_2}} | [6 \beta\frac{|f_1|}{\lambda_1-\lambda_2} \frac{{r^4}}{\eta } +\beta\frac{f}{\lambda_1}\frac{{12r^3}}{\eta } ]  \\
& -\frac{1} {{(\lambda_1-\lambda_2)^2}} [u_{221}]^2 \beta \lambda_1\frac{{8 r^4 }}{\eta }-\frac{1} {{\lambda_1-\lambda_2 }}  |u_{221}|[4 \beta \frac{{r^4}}{\eta }\frac{|f_2|}{\lambda_1}+\beta \lambda_1 \frac{{12 r^3  }}{\eta } ]\\
\geq& -2\beta \lambda_1 \frac{{r^2-x_1^2}}{\eta } - 2\beta f \frac{{r^2}}{\eta \lambda_1} - 4\beta |f_{12}| \frac{{r^4}}{\eta \lambda_1}- 4\beta |f_{2}| \frac{{r^4}}{\eta \lambda_1} | \frac{u_{111}}{u_{11}}|\\
& -(\frac{u_{112}} {{u_{11} }})^2  \beta \frac{f}{\lambda_1} \frac{{16r^4}}{\eta } -|\frac{u_{112}} {{u_{11}}} | [12 \beta\frac{|f_1|}{\lambda_1} \frac{{r^4}}{\eta } +\beta\frac{f}{\lambda_1}\frac{{24r^3}}{\eta } ]  \\
& -2[\frac{f_1 ^2}{\lambda_1^4} + \frac{f^2}{\lambda_1^4} (\frac{u_{111}}{u_{11}})^2] \beta \lambda_1\frac{{16 r^4 }}{\eta }- [\frac{|f_1| }{\lambda_1^2} + \frac{f}{\lambda_1^2} |\frac{u_{111}}{u_{11}}|][8 \beta \frac{{r^4}}{\eta }\frac{|f_2|}{\lambda_1}+\beta \lambda_1 \frac{{24 r^3  }}{\eta } ]\\
\geq& -2\beta \lambda_1 \frac{{r^2-x_1^2}}{\eta } - 2\beta f \frac{{r^2}}{\eta \lambda_1} - 4\beta |f_{12}| \frac{{r^4}}{\eta \lambda_1} -32 \beta \frac{f_1 ^2 r^4}{ \eta \lambda_1^3} -8 \beta \frac{|f_1 f_2| r^4}{ \eta \lambda_1^3} -24 \beta \frac{ |f_1|r^3}{ \eta \lambda_1}\\
&-\frac{\lambda_1}{2}(\frac{u_{112}} {{u_{11} }})^2 [ \frac{32 \beta f r^4 }{\eta \lambda_1^2} +\frac{12 \beta r^4}{\eta \lambda_1 ^2} +\frac{24 \beta f r^4}{\eta \lambda_1^2} ]- \frac{\lambda_1}{2}[\frac{12 \beta |f_1|^2r^4}{\eta \lambda_1 ^2} +\frac{24 \beta f r^2}{\eta \lambda_1^2} ]  \\
& - \frac{f}{2\lambda_1}(\frac{u_{111}}{u_{11}})^2[ \frac{64 \beta f r^4 }{\eta \lambda_1^2} +\frac{{16\beta r^4 }}{\eta \lambda_1^2}+ \frac{1}{4}+ \frac{1}{4}] - \frac{f}{2\lambda_1} [ \frac{{16\beta r^4 |f_2|^2}}{\eta \lambda_1^2}+ (\frac{{48 \beta r^3  }}{\eta })^2+ (\frac{{8 \beta r^4  }}{\eta }\frac{|f_{2}|}{f})^2 ]\\
\geq& -2\beta \lambda_1 \frac{{r^2-x_1^2}}{\eta } - 2\beta f \frac{{r^2}}{\eta \lambda_1} - 4\beta |f_{12}| \frac{{r^4}}{\eta \lambda_1} -32 \beta \frac{f_1 ^2 r^4}{ \eta \lambda_1^3} -8\beta \frac{|f_1 f_2| r^4}{ \eta \lambda_1^3} -24 \beta \frac{ |f_1|r^3}{ \eta \lambda_1}\\
&-\frac{\lambda_1}{4}(\frac{u_{112}} {{u_{11} }})^2 - [\frac{6 \beta |f_1|^2r^4}{\eta \lambda_1} +\frac{12 \beta f r^2}{\eta \lambda_1} ]  \\
& - \frac{f}{2\lambda_1}(\frac{u_{111}}{u_{11}})^2 -  [ \frac{{8\beta f r^4 |f_2|^2}}{\eta \lambda_1^3}+ \frac{{(48 \beta)^2 f r^6  }}{\eta^2\lambda_1 }+ \frac{{32 \beta^2 |f_{2}|^2 r^8  }}{\eta^2\lambda_1 f}].
\end{align*}

Now we just need to estimate $-2\beta \lambda_1 \frac{{r^2-x_1^2}}{\eta }$. If $x_2^2 \leq \frac{r^2}{2}$, we can get
\begin{align*}
-2\beta \lambda_1 \frac{{r^2-x_1^2}}{\eta }= - \frac{{8}}{r^2-x_2^2 } \lambda_1 \geq - \frac{{16}}{r^2} \lambda_1 \geq - \frac{1}{2}\frac{{c_0}}{r^2} f\lambda_1 = -  \frac{1}{2}\frac{{g'}}{g} f\lambda_1.
\end{align*}
If $x_2^2 \geq \frac{r^2}{2}$, we can get
\begin{align*}
-2\beta \lambda_1 \frac{{r^2-x_1^2}}{\eta }=& - \frac{{8}}{r^2-x_2^2 } \lambda_1 \geq -\frac{{x_2^2}}{r^2-x_2^2 } \frac{{8}}{r^2-x_2^2 } \lambda_1 = -\beta \lambda_1 \frac{1}{2}[\frac{{2x_2}}{r^2-x_2^2 }]^2  \\
\geq& -\beta \lambda_1 [\frac{\eta_2}{\eta}]^2 .
\end{align*}
Hence
\begin{align}\label{3.23}
-2\beta \lambda_1 \frac{{r^2-x_1^2}}{\eta }\geq  -  \frac{1}{2}\frac{{g'}}{g} f\lambda_1-\beta \lambda_1 [\frac{\eta_2}{\eta}]^2 .
\end{align}
and
\begin{align}\label{3.24}
\beta [\frac{f}{\lambda_1}\frac{{\eta _{11}}}{\eta } + \lambda_1 \frac{{\eta _{22}
}}{\eta } ]
\geq& -  \frac{1}{2}\frac{{g'}}{g} f\lambda_1-\beta \lambda_1 [\frac{\eta_2}{\eta}]^2 - \frac{f}{2\lambda_1}(\frac{u_{111}}{u_{11}})^2 -\frac{\lambda_1}{4}(\frac{u_{112}} {{u_{11} }})^2  \notag\\
&- 2\beta f \frac{{r^2}}{\eta \lambda_1} - 4\beta |f_{12}| \frac{{r^4}}{\eta \lambda_1} -32 \beta \frac{f_1 ^2 r^4}{ \eta \lambda_1^3} -8\beta \frac{|f_1 f_2| r^4}{ \eta \lambda_1^3} -24 \beta \frac{ |f_1|r^3}{ \eta \lambda_1} \notag \\
&- [\frac{6 \beta |f_1|^2r^4}{\eta \lambda_1} +\frac{12 \beta f r^2}{\eta \lambda_1} ]  -  [ \frac{{8\beta f r^4 |f_2|^2}}{\eta \lambda_1^3}+ \frac{{(48 \beta)^2 f r^6  }}{\eta^2\lambda_1 }+ \frac{{32 \beta^2 |f_{2}|^2 r^8  }}{\eta^2\lambda_1 f}].
\end{align}

\end{proof}

Now we continue to prove Theorem \ref{th1.1}.  From \eqref{3.10} and Lemma \ref{lem3.1}, we can get
\begin{align}
0 \ge& \sum\limits_{i = 1}^2 {F^{ii} \varphi _{ii} }  \notag\\
\geq&  \frac{1}{2}\frac{{g'}}{g} f \lambda_1  - 2\beta f \frac{{r^2}}{\eta \lambda_1} - 4\beta |f_{12}| \frac{{r^4}}{\eta \lambda_1} -32 \beta \frac{f_1 ^2 r^4}{ \eta \lambda_1^3} -8\beta \frac{|f_1 f_2| r^4}{ \eta \lambda_1^3} -24 \beta \frac{ |f_1|r^3}{ \eta \lambda_1} \notag \\
&- [\frac{6 \beta |f_1|^2r^4}{\eta \lambda_1} +\frac{12 \beta f r^2}{\eta \lambda_1} ]  -  [ \frac{{8\beta f r^4 |f_2|^2}}{\eta \lambda_1^3}+ \frac{{(48 \beta)^2 f r^6  }}{\eta^2\lambda_1 }+ \frac{{32 \beta^2 |f_{2}|^2 r^8  }}{\eta^2\lambda_1 f}]\notag \\
& - 4\beta f \frac{g'}{g} \frac{ r^4}{\eta} \frac{|u_2|}{r} - \frac{{g'}}{g} |\nabla u| |\nabla f| - \frac{| f_{11} |}{{\lambda_1 }}- 2\frac{f_1^2}{f\lambda_1} -\frac{8\beta f r^6 }{\eta^2\lambda_1}\notag\\
\geq&  \frac{c_0 m}{2r^2} \lambda_1   - \frac{{8 r^2\cdot M}}{\eta \lambda_1} -  \frac{{32r^2 \cdot r^2|f_{12}|}}{\eta \lambda_1} - \frac{128r^2 \cdot r^2 f_1 ^2 }{ \eta \lambda_1^3} - \frac{32r^2 \cdot r^2|f_1 f_2|}{ \eta \lambda_1^3} - \frac{ 96r^2 \cdot r|f_1|}{ \eta \lambda_1} \notag \\
&- \frac{24r^2 \cdot r^2|f_1|^2}{\eta \lambda_1} -\frac{48r^2 \cdot M}{\eta \lambda_1}   -   \frac{{32r^2 \cdot M \cdot r^2 |f_2|^2}}{\eta \lambda_1^3}- \frac{{192^2\cdot M \cdot r^6  }}{\eta^2\lambda_1 }- \frac{{512 r^6 \cdot r^2 |f_{2}|^2 \cdot\frac{1}{ m}}}{\eta^2\lambda_1} \notag \\
& -  \frac{ 16r^2 \cdot c_0 m}{\eta} \frac{|u_2|}{r} - \frac{{c_0}}{r^2} \cdot r |\nabla f| \cdot \frac{|\nabla u|}{r} - \frac{| f_{11} |}{{\lambda_1 }}- 2\frac{f_1^2}{m\lambda_1} - \frac{32 M r^6 }{\eta^2\lambda_1}. \notag
\end{align}
So we can get
\begin{align} \label{3.25}
\eta \lambda_1 \leq&   C (1 + \frac{|\nabla u| }{r}) r^4.
\end{align}
where $C$ is a positive constant depending only on $c_0, m, M, r|\nabla f|$, and $r^2 |\nabla^2 f|$.
So we can easily get
\begin{align*}
u_{\tau(0) \tau(0)}(0) \leq& \frac{1}{r^{4 \beta}} \phi (0)\leq \frac{1}{r^{4\beta}} \phi (x_0)  \leq  C (1 + \frac{\sup |D u| }{r}) e^{c_0\frac{\sup |Du|^2}{r^2}} \\
\leq& C e^{(c_0+2)\frac{\sup |Du|^2}{r^2}},
\end{align*}
and
\begin{align} \label{3.26}
|u_{\xi \xi} (0) |\leq u_{\tau(0) \tau(0)}(0) \leq  C e^{(c_0+2)\frac{\sup |Du|^2}{r^2}}, \quad \forall ~\xi \in \mathbb{S}^{1}.
\end{align}
Then we prove Theorem \ref{th1.1} under the condition $\eta \lambda_1 \geq 10^3 (1+ M + r \sup |\nabla f| + \frac{M}{m}\frac{\sup |Du|}{r})r^4$.
Hence Theorem \ref{th1.1} holds.

\begin{remark}
The eigenvector field $\tau$ is important. In fact, it is well-defined when the largest eigenvalue is distinct with others, and $\tau$ depends only on the adjoint matrix. For the Monge-Amp\`{e}re equation in dimension $n \geq 3$, we do not know whether the largest eigenvalue is distinct with others. So our method is not suitable for this case.
\end{remark}

\textbf{Acknowledgement}.
The authors would like to express sincere gratitude to Prof. Xi-Nan Ma for the constant encouragement in this subject.


\begin{thebibliography}{99}

\bibitem{A07}
B. Andrews, Pinching estimates and motion of hypersurfaces by curvature functions, J. Reine Angew. Math., 608 (2007), 17-33.

\bibitem{C13}
C.Q. Chen, Optimal concavity of some Hessian operators and the prescribed $\sigma_2$ curvature measure problem, Sci. China Math., 56(2013), 639-651.

\bibitem{CW01}
K.S. Chou, X.J. Wang, A variation theory of the Hessian equation, Comm. Pure Appl. Math., 54(9)(2001), 1029-1064.



\bibitem{GRW14}
P. Guan, C. Ren, Z. Wang,  Global $C^2$ estimates for convex solutions of curvature equations, Comm. Pure Appl. Math, 68(2015), 1287-1325.


\bibitem{H59}
E. Heinz, On elliptic Monge-Amp\`{e}re equations and Weyl¡¯s embedding problem, J. Analyse Math., 7(1959), 1-52.

\bibitem{LRW15}
M. Li, C. Ren, Z. Wang, An interior estimate for convex solutions and a rigidity theorem, J. Funct. Anal., 270(2016), 2691-2714.


\bibitem{P78}
A.V. Pogorelov, The multidimensional Minkowski problem, New York, Wiley 1978.


\bibitem{T97}
N.S. Trudinger, Weak solutions of Hessian equations, Comm. Partial Differential
Equations, 22(7-8)(1997), 1251-1261.

\bibitem{U90}
J. Urbas, On the existence of nonclassical solutions for two class of fully nonlinear
elliptic equations, Indiana Univ. Math. J., 39(1990), 355-382.

\bibitem{WY09}
 M. Warren, Y. Yuan, Hessian estimates for the $\sigma_2$ equation in dimension 3, Comm. Pure Appl. Math., 62(2009), 305-321.



\end{thebibliography}
\end{document}